\colorlet{darkishRed}{red!80!black}
\colorlet{darkishBlue}{blue!60!black}
\colorlet{darkishGreen}{green!60!black}
\renewcommand{\PrintDOI}[1]{\doi{#1}}
\let\setminus=\smallsetminus
\let\setminus=\smallsetminus
\DeclareMathOperator{\medcup}{\mathsmaller{\bigcup}}
\renewcommand{\subset}{\subseteq}
\newcommand{ \N } { \mathbb{N} }
\newcommand{ \Q } { \mathbb{Q} }
\def\calCommandfactory#1{%
   \expandafter\def\csname c#1\endcsname{\mathcal{#1}}}
\def\frakCommandfactory#1{%
   \expandafter\def\csname frak#1\endcsname{\mathfrak{#1}}}
\newcounter{ctr}
  \edef\X{\@Alph\c@ctr}
  \edef\Y{\@alph\c@ctr}
\renewcommand{\cC}{\mathscr{C}}
\renewcommand{\cP}{\mathscr{P}}
\def\lowfwd #1#2#3{{\mathop{\kern0pt #1}\limits^{\kern#2pt\raise.#3ex
\vbox to 0pt{\hbox{$\scriptscriptstyle\rightarrow$}\vss}}}}
\def\lowbkwd #1#2#3{{\mathop{\kern0pt #1}\limits^{\kern#2pt\raise.#3ex
\vbox to 0pt{\hbox{$\scriptscriptstyle\leftarrow$}\vss}}}}
\def\ve{\kern-1.5pt\lowfwd e{1.5}2\kern-1pt}
\def\ev{\kern-1pt\lowbkwd e{0.5}2\kern-1pt}
\def\vf{\kern-2pt\lowfwd f{2.5}2\kern-1pt}
\def\vSd{{\mathop{\kern0pt S\lower-1pt\hbox{${}
     \scriptstyle'$}}\limits^{\kern2pt\raise.1ex
     \vbox to 0pt{\hbox{$\scriptscriptstyle\rightarrow$}\vss}}}}
\newtheorem{theorem}{Theorem}[section] 
\newtheorem{lemma}[theorem]{Lemma}
\newtheorem{mainresult}{Theorem}
\newenvironment{customthm}[1]
  {\innercustomthm}
  {\endinnercustomthm}
\theoremstyle{definition}
\theoremstyle{remark}
\newcommand{\iecon}{infi\-nite\-ly edge-con\-nec\-ted}
\newcommand{\TTT}{T_{\aleph_0}\!\ast t}
\def\Pigraph{$\Pi$-graph}
\newcommand{\FG}{F}
\newcommand{\hFG}{\breve{F}}
\newcommand{\uG}[1]{G_{\ge{#1}}}
\newcommand{\dG}[1]{G_{\le{#1}}}
\begin{document}
\vspace*{-2.7cm} 
\title{Ubiquity and the Farey graph}
\author{Jan Kurkofka}
\address{University of Hamburg, Department of Mathematics, Bundesstraße 55 (Geomatikum), 20146 Hamburg, Germany}
\email{jan.kurkofka@uni-hamburg.de}
\keywords{whirl graph; edge-disjoint paths; order-compatible paths; traverse; same order; ubiquity; ubiquitous; Farey graph.}
\@namedef{subjclassname@2020}{\textup{2020} Mathematics Subject Classification}
\subjclass[2020]{05C63, 05C10, 05C38, 05C40, 05C55, 05C83}

\begin{abstract}
We construct a countable planar graph which, for any two vertices $u,v$ and any integer $k\ge 1$, contains $k$ edge-disjoint order-compatible $u$--$v$ paths but not infinitely many.
This graph has applications in Ramsey theory, in the study of connectivity and in the characterisation of the Farey graph.
\end{abstract}

\maketitle

\vspace*{-.8cm}
\begin{figure}[h]
    \centering
    \includegraphics[width=.5\textwidth]{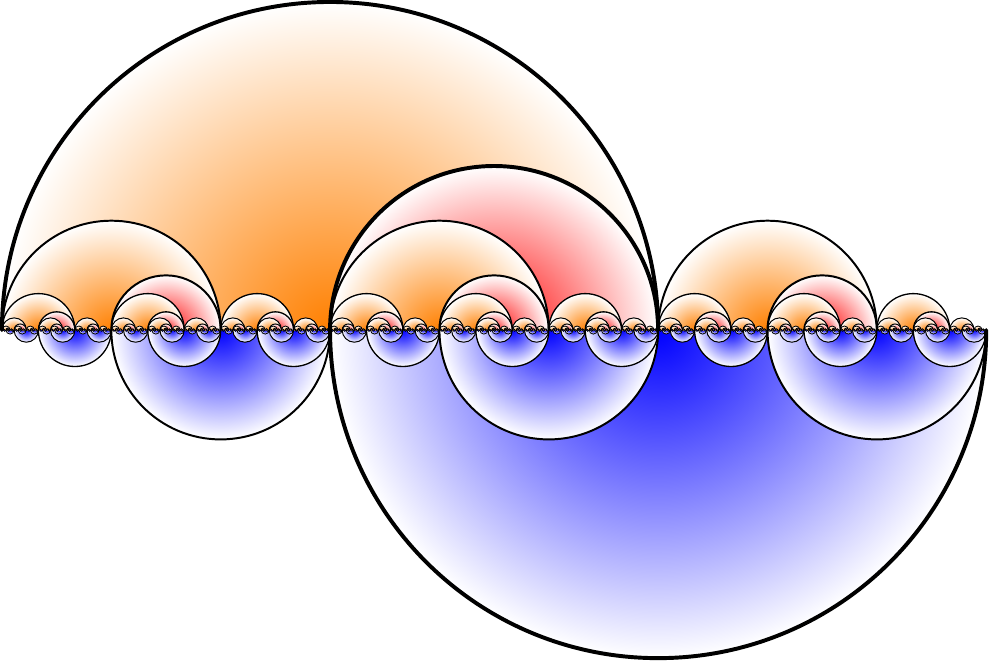}
    \caption{The whirl graph, coloured}
    \label{fig:Whirl}
\end{figure}

\vspace*{-0.4cm}\section{Introduction}

\noindent ``One of the most basic problems in an infinite setting that has no finite equivalent is whether or not `arbitrarily many', in some context, implies `infinitely many'.'' 
(Diestel~\cite{DiestelBook5}).
For example, Halin~\cites{H65,DiestelBook5} proved that if a graph contains $k$ disjoint rays for every integer $k$, then it contains infinitely many disjoint rays.
Substructures of a given type---subgraphs, minors, rooted minors or whatever---of which there must exist infinitely many disjoint copies (for some notion of disjointness) in a given graph as soon as there are arbitrarily (finitely) many such copies are called \emph{ubiquitous}~\cite{DiestelBook5}.
Examples of ubiquity results can be found in
\cites{%
A77,
A79,
A02,
A13,
DoubleRayEUbiquitous,%
U1,U2,U3,%
DiestelBook5,%
H65,
H70,
L76,
W76
}.

Usually, ubiquity problems are trivial as soon as the substructures considered are finite.
For example, if a graph $G$ contains $k$ disjoint $u$--$v$ paths for every integer~$k$ and some fixed vertices $u$ and~$v$, we can greedily find infinitely many disjoint \mbox{$u$--$v$} paths in~$G$.
Similarly, edge-disjoint paths between two fixed vertices are clearly ubiquitous.
Interestingly, this changes as soon as we require our edge-disjoint paths to traverse their common vertices in the same order.

Let us call two $u$--$v$ paths \emph{order-compatible} if they traverse their common vertices in the same order.
Our first aim in this paper is to show that edge-disjoint order-compatible paths between two given vertices are not ubiquitous: we shall construct a graph $G$, the whirl graph shown in Figure~\ref{fig:Whirl},
that has two vertices $u$ and $v$ such that $G$ contains $k$ edge-disjoint order-compatible $u$--$v$ paths for every integer $k$, but not infinitely many.
In fact, the whirl graph $G$ will have this property for \emph{all} pairs of vertices:


\begin{mainresult}\label{Mainresult}
The whirl graph is a countable planar graph that contains $k$ edge-disjoint pairwise order-compatible paths between every two of its vertices for every $k\in\N$, but which does not contain infinitely many edge-disjoint pairwise order-compatible paths between any two of its vertices.
\end{mainresult}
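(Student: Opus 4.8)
The plan is to first fix a convenient description of the whirl graph $G$ and of the possible shapes of its $u$--$v$ paths, and then to verify the four parts of the statement in turn. We construct $G$ below in countably many stages; concretely one may picture it, as in Figure~\ref{fig:Whirl}, as a sequence of finite cycles $C_1,C_2,\dots$ --- the \emph{rounds} of the whirl --- drawn one inside the next, together with edges that spiral from each round into the following one. Countability is then immediate. Planarity I would establish directly from the drawing in Figure~\ref{fig:Whirl} rather than through a Kuratowski-type limiting argument: the rounds are concentric circles and the spiralling edges run inwards without crossing one another or the circles, so the drawing already exhibits $G$ as a plane graph.

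For the positive half, fix two vertices $u,v$ and an integer $k\in\N$. I would route $k$ edge-disjoint $u$--$v$ paths $P_1,\dots,P_k$ that all ``wind the same way'' through the rounds, with $P_i$ reaching roughly the $i$-th round before returning; edge-disjointness comes from using distinct rounds and distinct spiralling edges for distinct indices. Pairwise order-compatibility is the real content here: because $P_1,\dots,P_k$ all turn in one common rotational direction, any two of them traverse the finitely many vertices they share in the cyclic order dictated by that direction, so no two of them clash. Since $G$ looks the same from one round to the next, this reduces to a finite, location-independent check, and a compatible family of size $k$ can always be extended by one further path drawn from a deeper round; hence the property holds for every $k$ and every pair $u,v$.

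The substantial assertion, and the step I expect to be the main obstacle, is that there is no infinite such family. Suppose $P_1,P_2,\dots$ were pairwise order-compatible, edge-disjoint $u$--$v$ paths. Each finite part of the whirl contains only finitely many edges, so, the $P_n$ being edge-disjoint, for every $m$ all but finitely many of them leave the first $m$ rounds; a diagonal argument then yields a subfamily whose members reach strictly increasing depths. I would then show that whenever a path $P$ is much shallower than a path $Q$ in this subfamily, the spiralling of the connecting edges forces two vertices $x,y$ lying on both $P$ and $Q$ such that $P$ visits $x$ before $y$ while $Q$, which is obliged to wind one or more full rounds further before it can come back, visits $y$ before $x$ --- contradicting the order-compatibility of $P$ and $Q$. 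The delicate point is exactly the production of this witnessing pair $x,y$ and the argument that going a round deeper genuinely reverses the order in which such a pair is met; this is where the precise design of the whirl (the rotational direction and the offsets of the spiralling edges) does the real work. Granting this last point, the theorem follows by assembling the four parts.
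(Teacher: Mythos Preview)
Your outline contains a genuine conceptual gap in the positive half that, once noticed, also clarifies what is really going on in the negative half.

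You propose to obtain $k$ edge-disjoint order-compatible $u$--$v$ paths by sending $P_i$ roughly to the $i$-th round and back, and you claim that because they ``all turn in one common rotational direction'' they will meet their common vertices in a single consistent order. But this is exactly what the whirl graph is built to \emph{prevent}. Already for $u=0$, $v=1$ the natural level-$n$ Hamilton paths $G_n$ of the paper's construction are not order-compatible with one another: $G_1$ visits $2/3$ before $1/3$, while $G_2$ visits $1/3=3/9$ before $2/3=6/9$. More to the point, your scheme is depth-indexed and open-ended: if it produced an order-compatible family of size $k$ for every $k$, the very same recipe would produce an infinite such family, contradicting the theorem you are trying to prove. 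So the positive half cannot be done by any ``one path per round'' construction.

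The paper sidesteps this entirely with Kneip's extremal lemma: among all families of $k$ edge-disjoint $u$--$v$ paths, choose one minimising the total number of edges; any order-clash between two of its paths would allow a shortcut, so the minimiser is automatically pairwise order-compatible. This argument is intrinsically finite (there is no edge-minimal infinite family), which is precisely why it does not overshoot into a contradiction. You should replace your constructive attempt with this extremal step.

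For the negative half your instincts are right but the mechanism is not yet pinned down. The paper's key lemma says that any $u$--$v$ path lying entirely in $G_{\ge n}$ must visit every vertex of $V_{n-1}\cap[u,v]$, and in the natural $\Q$-order, because each such vertex is a cutvertex of $G_{\ge n}$. A path $P$ that uses an edge of level $M$ is then forced to contain the four-vertex zigzag $\tfrac{3k}{3^M},\tfrac{3k+2}{3^M},\tfrac{3k+1}{3^M},\tfrac{3k+3}{3^M}$, while any path $Q\subset G_{\ge M+1}$ must traverse these same four vertices in their $\Q$-order; so $P$ and $Q$ clash on the pair $\tfrac{3k+1}{3^M},\tfrac{3k+2}{3^M}$. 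This is the concrete witness $x,y$ you were looking for, and it comes from a cutvertex lemma rather than from a winding-number intuition.
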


\subsection*{Applications}

Our result has two applications.

\noindent\begin{figure}[ht]
\centering
\begin{minipage}{.45\textwidth}
  \centering
  \includegraphics[height=.7944\textwidth]{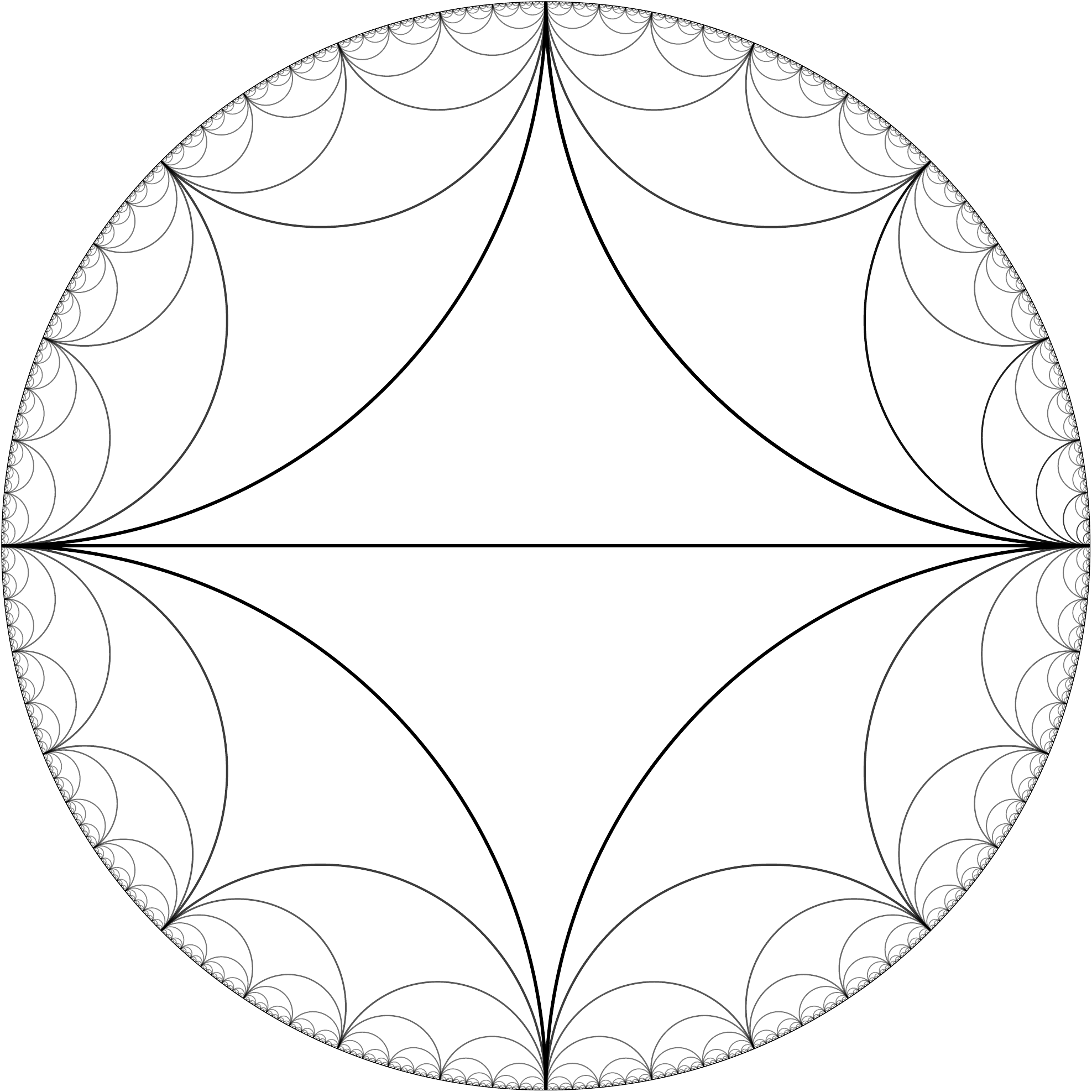}
  \captionof{figure}{The Farey graph}
  \label{fig:FareyGraph}
\end{minipage}%
\begin{minipage}{.55\textwidth}
  \centering
  \includegraphics[height=.65\textwidth]{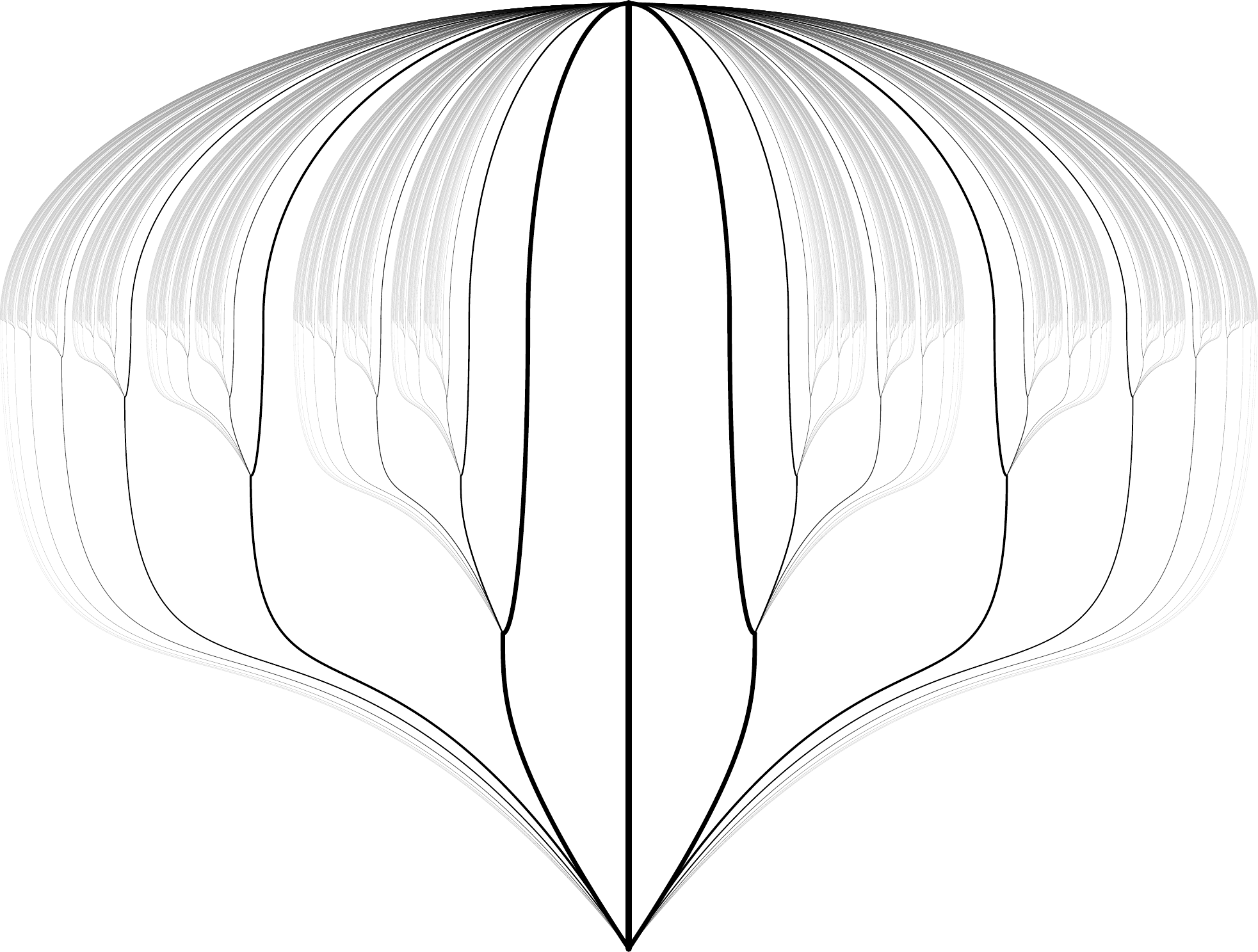}
  \captionof{figure}{The graph $\TTT$}
  \label{fig:TTT}
\end{minipage}
\end{figure}

The Farey graph, shown in Figure~\ref{fig:FareyGraph} and surveyed in \cites{OfficeHoursGroupTheory,hatcher2017topology}, plays a role in a number of mathematical fields ranging from group theory and number theory to geometry and dynamics~\cite{OfficeHoursGroupTheory}.
Curiously, graph theory has not been among these until very recently, when it was shown that the Farey graph plays a central role in graph theory too: it is one of two infinitely edge-connected graphs that must occur unavoidably as a minor in every infinitely edge-connected graph.
The second graph is $\TTT$, the graph obtained from the infinitely-branching tree $T_{\aleph_0}$ by joining an additional vertex $t$ to all its vertices; see Figure~\ref{fig:TTT}.

\begin{customthm}{\cite{TypicalInfinitelyEdgeconnectedGraphs}}
Every infinitely edge-connected graph contains either the Farey graph or $T_{\aleph_0}\!\ast t$ as a minor.
\end{customthm}

\noindent This result lies in the intersection of Ramsey theory and the study of connectivity; see the introduction of~\cite{TypicalInfinitelyEdgeconnectedGraphs}.
Related results can be found in~\cites{DiestelBook5,GenGridTheorem,GollinHeuerKcon,halin78,JoerisPhD,OporowskiOxleyThomas}; see \cite{DiestelBook5}*{§9.4} or the introduction of~\cite{GollinHeuerKcon} for surveys.

The obvious question this theorem raises is whether it is best possible in the sense that one cannot replace `minor' with `topological minor' in its wording.
The whirl graph and Theorem~\ref{Mainresult} are needed in~\cite{TypicalInfinitelyEdgeconnectedGraphs} to answer this question in the affirmative:

\begin{customthm}{\cite{TypicalInfinitelyEdgeconnectedGraphs}}
The whirl graph is \iecon\ but contains neither the Farey graph nor $\TTT$ as a topological minor.
\end{customthm}

The second application of the whirl graph and Theorem~\ref{Mainresult} concerns the first graph-theoretic characterisation of the Farey graph.
Very recently it was shown that the Farey graph
is uniquely determined by its connectivity: up to minor-equivalence, the Farey graph is the unique minor-minimal graph that is \iecon\ but such that every two vertices can be finitely separated.
A~\mbox{\emph{\Pigraph }} is an \iecon\ graph that does not contain infinitely many independent paths between any two of its vertices.
A \Pigraph\ is \emph{typical} if it occurs as a minor in every \Pigraph .
Note that any two typical \Pigraph s are minors of each other; we call such graphs \emph{minor-equivalent}.

\begin{customthm}{\cite{FareyGraphChar}}
Up to minor-equivalence, the Farey graph is the unique typical \mbox{\Pigraph }.
\end{customthm}

\noindent This theorem continues to hold if we require all minors to be tight:
A \emph{tight} minor is a minor with finite branch sets.
A \Pigraph\ is \emph{tightly} typical it it occurs as a tight minor in every \Pigraph .
Note that any two tightly typical \Pigraph s are tight minors of each other; we call such graphs \emph{tightly minor-equivalent}.

\begin{customthm}{\cite{FareyGraphChar}}
Up to tight minor-equivalence, the Farey graph is the unique tightly typical \Pigraph .
\end{customthm}

\noindent The obvious question this theorem raises is whether it is best possible in the sense that one cannot replace `tight' with `topological'.
The whirl graph and Theorem~\ref{Mainresult} are needed in~\cite{FareyGraphChar} to answer this question in the affirmative:

\begin{customthm}{\cite{FareyGraphChar}}
The whirl graph is a \Pigraph\ that contains the Farey graph as a tight minor but not as a topological minor.
\end{customthm}

\noindent This theorem in turn raises the two questions how exactly the Farey graph is contained in the whirl graph as a minor and how large the branch sets actually are.
We shall use the Cantor set to explicitly determine a Farey graph minor in the whirl graph with branch sets of size two; see Section~\ref{sec:FareyInWhirl} for the explicit description of the Farey graph minor.

\begin{customthm}{\ref{FareyGraphCantor}}
The whirl graph contains the Farey graph as a minor with branch sets of size two, but not as a topological minor.
\end{customthm}

This note is organised as follows.
We introduce the whirl graph in Section~\ref{sec:ThmOne} where we also prove Theorem~\ref{Mainresult}, and we prove Theorem~\ref{FareyGraphCantor} in Section~\ref{sec:FareyInWhirl}.

\section{Proof of Theorem~1}\label{sec:ThmOne}

\noindent We use the graph-theortic notation of Diestel's book~\cite{DiestelBook5}.
A \emph{separation} of a graph $H$ is a set $\{A,B\}$ such that $A\cup B=V(H)$ and $H$ contains no edge between $A\setminus B$ and $B\setminus A$.
Then $A\cap B$ is the \emph{separator} of $\{A,B\}$.

The \emph{whirl graph}, shown in Figure~\ref{fig:Whirl}, is the graph $G=(V,E)$ on $V:=\bigcup_{n=1}^\infty V_n$ where $V_n:=\big\{\tfrac{0}{3^n},\tfrac{1}{3^n},\ldots,\tfrac{3^n}{3^n}\big\}$
and with edge set $E:=\bigcup_{n=1}^\infty E_n$ where
\begin{align*}
    E_n:=\Big\{\,\big\{\tfrac{3k}{3^n},\tfrac{3k+2}{3^n}\big\},\big\{\tfrac{3k+1}{3^n},\tfrac{3k+2}{3^n}\big\},\big\{\tfrac{3k+1}{3^n},\tfrac{3k+3}{3^n}\big\}\;\,\Big\vert\,\;k\in\big\{0,1,\ldots,3^{n-1}-1\big\}\,\Big\}.
\end{align*}
For every integer $n\ge 1$ we define the three subgraphs
\begin{align*}
    \dG{n}:=(V_n,\medcup_{k=1}^nE_k)\quad\text{and}\quad G_n:=(V_n,E_n)\quad\text{and}\quad \uG{n}:=(V,\medcup_{k=n}^\infty E_k);
\end{align*}
see Figure~\ref{fig:WhirlBlock} for an illustration.
Note that $G_n$ is a Hamilton path of $\dG{n}$ for all $n$.

\begin{figure}[ht]
    \centering
    \includegraphics[width=.618\textwidth]{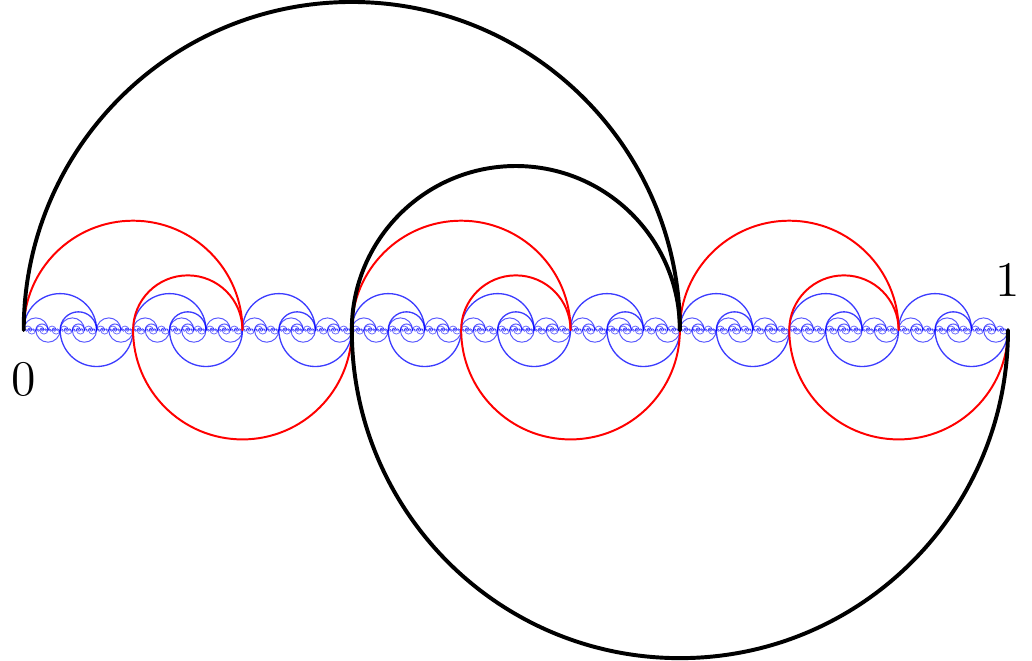}
    \caption{$G_1=\dG 1$ is black, $\dG 2$ is the union of black and red, $G_2$ is red, $\uG 2$ is the union of red and blue, and $\uG 3$ is blue}
    \label{fig:WhirlBlock}
\end{figure}




For the proof of Theorem~\ref{Mainresult} we need another theorem and a lemma.
At the end of one of my talks at Hamburg that involved order-compatible paths, Joshua Erde asked: \emph{Is there a function $f\colon\N\to\N$ such that, for every graph $H$ and every two vertices $u$ and $v$ of $H$, the existence of at least $f(k)$ many edge-disjoint $u$--$v$ paths in $H$ implies the existence of $k$ many edge-disjoint pairwise order-compatible $u$--$v$ paths in $H$?}
The next day, Jakob Kneip answered the question in the affirmative for $f(k)=k$ the identity on~$\N$:

\begin{theorem}[Kneip]\label{edgeDisjPathsIntoPWorderCompatible}
Let $H$ be any graph, let $u$ and $v$ be any two distinct vertices of~$H$, and let $n$ be any natural number.
If $H$ contains $n$ edge-disjoint $u$--$v$ paths, then $H$ also contains $n$ edge-disjoint pairwise order-compatible $u$--$v$ paths.
\end{theorem}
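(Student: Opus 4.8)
The plan is to merge the $n$ given paths into one oriented subgraph, delete its directed cycles, and then read off the desired paths from a topological ordering.

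First I would fix $n\ge 1$ edge-disjoint $u$--$v$ paths $P_1,\dots,P_n$ in $H$, set $F:=\bigcup_{i=1}^{n}P_i\subseteq H$, and orient each edge $e$ of $F$ in the direction in which it is traversed by the unique path $P_i$ containing it; call the resulting digraph $D$. A short degree count then shows that in $D$ we have $d^{+}(u)-d^{-}(u)=n$ and $d^{-}(v)-d^{+}(v)=n$ while $d^{+}(w)=d^{-}(w)$ for every other vertex $w$. The point is that $u$ and $v$ are \emph{endpoints} of every $P_i$, so in $F$ each edge at $u$ points away from $u$ and each edge at $v$ points towards $v$, whereas every internal vertex of a path receives exactly one incoming and one outgoing edge from that path. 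In flow terms, $D$ is an integral flow from $u$ to $v$ of value $n$.

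Next I would repeatedly delete from $D$ the edge set of a directed cycle, as long as one exists. Deleting a directed cycle leaves the difference of in- and out-degree at every vertex unchanged, so the property above is preserved, and the process stops because the number of edges strictly decreases; let $D'$ be the resulting \emph{acyclic} digraph, still a flow of value $n$ from $u$ to $v$. I would then peel off $u$--$v$ paths greedily: starting at $u$ and always following an outgoing edge of $D'$, acyclicity forbids revisiting a vertex, and one can only get stuck at a vertex of out-degree $0$; by the flow property every vertex other than $v$ that has an incoming edge also has an outgoing one, so the walk must end at $v$. This produces a directed $u$--$v$ path $Q_1$ in $D'$; removing its edges lowers the value by $1$ and keeps $D'$ acyclic and a flow, so after $n$ rounds one obtains directed $u$--$v$ paths $Q_1,\dots,Q_n$ whose edge sets partition $E(D')$. (This is just flow decomposition specialised to an acyclic flow.)

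Finally I would fix a topological ordering $\prec$ of $D'$. Along each $Q_i$ the vertices occur in $\prec$-increasing order, since every directed edge of $D'$ does; hence for any two of the paths $Q_i,Q_j$ and any two vertices $x,y$ lying on both, $x$ precedes $y$ on $Q_i$ iff $x\prec y$ iff $x$ precedes $y$ on $Q_j$. Therefore $Q_1,\dots,Q_n$ are $n$ edge-disjoint, pairwise order-compatible $u$--$v$ paths in $H$. I expect the only real difficulty to be the conceptual first move: recognising that two $u$--$v$ paths fail to be order-compatible precisely when their joint orientation contains a directed cycle, so that ``uncrossing'' the paths is the same as cancelling cycles of a flow --- once the task has been recast as ``make an acyclic flow of the same value'', everything else is routine.
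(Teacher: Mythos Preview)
Your argument is correct. The flow set-up is sound (paths are finite, so $F$ is finite and the cycle-deletion terminates), the greedy peeling works exactly as you describe, and the topological ordering of the acyclic $D'$ gives order-compatibility at once.

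The paper's proof reaches the same conclusion by a shorter extremal argument: take a system of $n$ edge-disjoint $u$--$v$ paths using as few edges as possible, and observe that if two of them, $P$ and $Q$, traverse some $x,y$ in opposite orders then $uPx\cup xQv$ and $uQy\cup yPv$ yield two new edge-disjoint $u$--$v$ paths that together avoid all edges of $xPy$ and $yQx$, contradicting minimality. Conceptually the two proofs are the same move in different clothing: a pair of incompatible paths produces a directed cycle in your $D$, and the paper's ``uncross and shorten'' step is precisely one round of your cycle-deletion. What the paper buys is brevity --- a single contradiction instead of the full flow-decomposition machinery. What your approach buys is an explicit construction and a cleaner explanation of \emph{why} order-compatibility is attainable: it is equivalent to acyclicity of the oriented union, and acyclicity can always be achieved without lowering the flow value. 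Your final remark identifying ``incompatibility $=$ directed cycle'' is exactly the bridge between the two arguments.
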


\begin{proof}
Given $H,u,v,n$ we suppose that $H$ contains $n$ edge-disjoint $u$--$v$ paths.
Choose a path-system $\cP$ of $n$ edge-disjoint $u$--$v$ paths in $H$ that uses as few edges of $H$ as possible.
Then the paths in $\cP$ are pairwise order-compatible:
For this, assume for a contradiction that $P$ and $Q$ are paths in $\cP$ such that $P$ traverses two vertices $x$ and $y$ as $x<_P y$ while $Q$ traverses them as $y<_Q x$.
Then $uPx\cup xQv$ and $uQy\cup yPv$ are connected edge-disjoint subgraphs of $P\cup Q$, so we may choose one $u$--$v$ path in each of the two.
Now replacing $P$ and $Q$ with these two new paths yields a system of $n$ edge-disjoint $u$--$v$ paths using strictly fewer edges of~$H$ than~$\cP$, since the edges of $xPy$ and $yQx$ are not used by the new paths (contradiction).
\end{proof}

\begin{lemma}\label{PathControl}
Let $u,v\in V$ be any two vertices with $u<_\Q v$ and let $n>1$ be any integer with $u,v\in V_{n-1}$.
If $P\subset\uG{n}$ is any $u$--$v$ path, then
\begin{align*}
V_{n-1}\cap [u,v]\subset V(P)\subset V\cap [u,v]
\end{align*}
and $P$ traverses the vertices in $V_{n-1}\cap [u,v]$ in the natural order induced by~$\Q$.
\end{lemma}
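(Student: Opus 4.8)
\smallskip

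The plan is to reduce all three assertions to one structural claim about $\uG n$: \emph{for every $m\ge n$, no vertex of $V_{n-1}$ lies strictly between the two endpoints of an edge of $E_m$} (in the order induced by $\Q$). Granting this, every vertex $w\in V_{n-1}$ separates, in $\uG n$, the set $L_w:=\{x\in V: x<w\}$ from the set $R_w:=\{x\in V: x>w\}$: an edge of $\uG n$ between $L_w$ and $R_w$ would lie in some $E_m$ with $m\ge n$ and would have $w$ strictly between its endpoints. Since $V\setminus\{w\}=L_w\cup R_w$, every connected subgraph of $\uG n-w$ lies entirely in $L_w$ or entirely in $R_w$, and every connected subgraph of $\uG n$ meeting both $L_w$ and $R_w$ contains $w$. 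These are exactly the facts the rest of the argument will use.

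First I would verify the structural claim by a short arithmetic check. An edge of $E_m$ has both endpoints among $\tfrac{3k}{3^m},\tfrac{3k+1}{3^m},\tfrac{3k+2}{3^m},\tfrac{3k+3}{3^m}$ for some $k$, and the interval spanned by its endpoints is one of $[\tfrac{3k}{3^m},\tfrac{3k+2}{3^m}]$, $[\tfrac{3k+1}{3^m},\tfrac{3k+2}{3^m}]$, $[\tfrac{3k+1}{3^m},\tfrac{3k+3}{3^m}]$. A vertex of $V_{n-1}$ has the form $j/3^{n-1}$, so after multiplying everything by $3^m$ it becomes the integer $j\cdot 3^{m-n+1}$, which is a multiple of $3$ since $m\ge n$, whereas the three open spans become $(3k,3k+2)$, $(3k+1,3k+2)$, $(3k+1,3k+3)$, none of which contains a multiple of $3$. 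Hence no $j/3^{n-1}$ lies strictly inside the span of an edge of $E_m$.

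With the structural claim in hand I would deduce the three assertions. Write $w_i:=i/3^{n-1}$, so that $V_{n-1}=\{w_0,\dots,w_{3^{n-1}}\}$ and $u=w_s$, $v=w_t$ with $0\le s<t\le 3^{n-1}$. For $V(P)\subset V\cap[u,v]$: the subgraph $P-u$ is connected, avoids $w_s=u$, and contains $v\in R_{w_s}$, so $P-u\subset R_{w_s}$ and hence $V(P)\subset\{x\ge w_s\}$; symmetrically, using $w_t$, $V(P)\subset\{x\le w_t\}$, and the intersection is $[u,v]$. For $V_{n-1}\cap[u,v]=\{w_s,\dots,w_t\}\subset V(P)$: the endpoints $w_s,w_t$ lie on $P$ trivially, and for $s<i<t$ the connected graph $P$ meets both $L_{w_i}$ (at $u$) and $R_{w_i}$ (at $v$), so it contains $w_i$. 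For the order: traverse $P$ from $u$ to $v$; if some $w_j$ preceded some $w_i$ on $P$ with $s\le i<j\le t$, then the initial segment of $P$ from $u$ to $w_j$ would be connected, would meet $L_{w_i}$ (at $u=w_s$) and $R_{w_i}$ (at $w_j$), yet would not contain $w_i$ — contradicting the separation at $w_i$. So $P$ traverses $w_s,w_{s+1},\dots,w_t$ in this order, which is the natural order induced by $\Q$.

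I expect the only real obstacle to be the arithmetic of the structural claim: correctly listing the possible endpoint-patterns of an edge of $E_m$, and keeping straight which rationals are genuinely new at level $m$ as opposed to already lying in $V_{n-1}$. The boundary cases $s=0$ and $t=3^{n-1}$ (where $L_{w_s}$ or $R_{w_t}$ is empty) and edges with an endpoint equal to some $w_i$ need a glance but cause no trouble. Once the structural claim is in place, everything else is the routine separation property of paths used above.
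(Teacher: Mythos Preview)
Your proof is correct and follows essentially the same approach as the paper: both arguments rest on the fact that every vertex of $V_{n-1}\setminus\{0,1\}$ is a cutvertex of $\uG n$ separating the vertices to its left from those to its right, and then deduce the three assertions from this separation property. The paper simply asserts the cutvertex property and leaves the deduction to the reader, whereas you spell out both the arithmetic verification (your structural claim) and the path-separation consequences in full detail.
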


\begin{proof}
Every vertex $x\in V_{n-1}\setminus\{0,1\}$ is a cutvertex of $\uG{n}$ and the components of $\uG{n}-x$ are $\uG{n}[\,V\cap [0,x)\,]$ and $\uG{n}[\,V\cap (x,1]\,]$.
This clearly implies the statement of the lemma.
\end{proof}

Now we prove Theorem~\ref{Mainresult}:

\begin{proof}[{Proof of Theorem~\ref{Mainresult}}]
Clearly, $G$ is planar.
It is \iecon\ because it can be written as the edge-disjoint union $\bigcup_{n\in\N}G_n=G$.
In particular, it follows from Theorem~\ref{edgeDisjPathsIntoPWorderCompatible} that $G$ contains $k$ edge-disjoint pairwise order-compatible paths between any two vertices, for every $k\in\N$.

It remains to show that $G$ does not contain infinitely many edge-disjoint pairwise order-compatible paths between any two vertices.
For this, let any two vertices $u$ and $v$ of $G$ be given, say with $u<_\Q v$.
We pick any integer $N>1$ such that $u,v\in V_{N-1}$.
Since all the edge sets $E_0,E_1,\ldots$ are finite, it suffices to show the following assertion:
\vspace{.5\baselineskip}
\begin{fleqn}%
\begin{equation*}%
\hspace{\parindent}\begin{aligned}
    \parbox{\textwidth-2\parindent}{\emph{Whenever $P$ is any $u$--$v$ path in $\uG N$ and $M\ge N$ is the minimal integer such that $\dG{M}$ contains $P$, no $u$--$v$ path in $\uG{M+1}$ is order-compatible with~$P$.}}%
\end{aligned}%
\end{equation*}%
\end{fleqn}%

\vspace{.5\baselineskip}
\noindent Let $P$ and $M$ be given.
By the minimality of $M$ the path $P$ must have an edge $e$ in $E_M$.
Let $x$ and $y$ be the two consecutive elements of $V_{M-1}\subset \Q$ bounding an interval $[x,y]$ that contains the endvertices of $e$.
We claim that $P$ contains the subpath $xG_My$ of the $0$--$1$ Hamilton path $G_M$ of $\dG{M}$.

Indeed, on the one hand the edge $e$ lies on $P$, so $P$ has a vertex in $V_M\cap (x,y)$.
On the other hand, the separator of the separation $\{\,V_M\setminus (x,y),V_M\cap [x,y]\,\}$ of $\dG{M}$ is $\{x,y\}$ while $u,v\in V_{M-1}\subset V_M\setminus (x,y)$ and $\dG{M}[\,V_M\cap [x,y]\,]=xG_M y$.
Thus, the $u$--$v$ path $P$ meeting $V_M\cap (x,y)$ implies that $P$ contains both vertices $x$ and $y$ and that either $xPy=xG_My$ or $yPx$ is the reverse of $xG_My$.
In either case, $P$ contains $xG_M y$.

Now let $Q$ be any $u$--$v$ path in $\uG{M+1}$.
We show that $Q$ is not order-compatible with $P$.
For this, we consider the path $xG_M y$ that is contained in~$P$.
We apply Lemma~\ref{PathControl} twice:
First, we apply it to $u,v,N$ and $P$ to establish $V(P)\subset V\cap [u,v]$ which ensures $u\le x<y\le v$ in~$\Q$.
And the second time we apply it to $u,v,M+1$ and $Q$ to establish $V_M\cap [u,v]\subset V(Q)$ and that $Q$ traverses the vertices in $V_M\cap [u,v]$ in the natural order induced by~$\Q$.
Altogether, we deduce that $Q$ traverses the vertices in $V(xG_My)\subset V_M\cap [x,y]\subset V_M\cap [u,v]$ in the natural order induced by~$\Q$.
Since $P$ contains $xG_My$ and $xG_M y$ is the path
\begin{align*}
    \tfrac{3k}{3^M}\,\tfrac{3k+2}{3^M}\,\tfrac{3k+1}{3^M}\,\tfrac{3k+3}{3^M}
\end{align*}
for the appropriate integer $k$, the paths $P$ and $Q$ certainly are not order-compatible, completing the proof.
\end{proof}

\section{Finding the Farey graph in the whirl graph}\label{sec:FareyInWhirl}

\noindent The \emph{Farey graph} $F$ is the graph on $\Q\cup\{\infty\}$ in which two rational numbers $a/b$ and $c/d$ in lowest terms (allowing also $\infty=(\pm 1)/0$) form an edge if and only if $\det\bigl( \begin{smallmatrix}a & c\\ b & d\end{smallmatrix}\bigr)=\pm 1$, cf.~\cite{OfficeHoursGroupTheory}.
In this paper we do not distinguish between the Farey graph and the graphs that are isomorphic to it.
For our graph-theoretic point of view it will be more convenient to work with the following purely combinatorial definition of the Farey graph that is indicated in~\cite{OfficeHoursGroupTheory} and~\cite{hatcher2017topology}.

The \emph{halved Farey graph} $\hFG_0$ of order $0$ is a $K^2$ with its sole edge coloured blue.
Inductively, the \emph{halved Farey graph} $\hFG_{n+1}$ of order $n+1$ is the edge-coloured graph that is obtained from $\hFG_n$ by adding a new vertex $v_e$ for every blue edge $e$ of $\hFG_n$, joining each $v_e$ precisely to the endvertices of $e$ by two blue edges, and colouring all the edges of $\hFG_n\subset\hFG_{n+1}$ black.
The \emph{halved Farey graph} $\hFG:=\bigcup_{n\in\N}\hFG_n$ is the union of all $\hFG_n$ without their edge-colourings, and the \emph{Farey graph} is the union $F=G_1\cup G_2$ of two copies $G_1,G_2$ of the halved Farey graph such that $G_1\cap G_2=\hFG_0$.

It was shown in~\cite{FareyGraphChar} that any graph contains the Farey graph as a minor with finite branch sets if it is \iecon\ and does not contain infinitely many independent paths between any two vertices.
As independent paths are order-compatible, it follows that the whirl graph contains the Farey graph as a minor with finite branch sets.
The result in~\cite{FareyGraphChar}, however, does not
provide an explicit description of the Farey graph minor in the whirl graph, nor does it tell us how large the branch sets actually are.
In our situation, the latter is especially unsatisfactory, as we already know that infinitely many branch sets must be non-trivial because the whirl graph does not contain the Farey graph as a topological minor.
That is why in this section we use the Cantor set to explicitly determine a Farey graph minor in the whirl graph with branch sets of size two.

Recall that the Cantor set is $C:=\bigcap_{n=0}^\infty\bigcup\cC_n$ where $\cC_0:=\{\,[0,1]\,\}$ and $\cC_{n+1}$ is obtained from $\cC_n$ by replacing each interval $[a,a+\Delta]\in\cC_n$ with the two intervals $[a,a+\tfrac{1}{3}\Delta]$ and $[a+\tfrac{2}{3}\Delta,a+\Delta]$.

We define the subgraph $G^\ast:=(C^\ast,E^\ast)\subset G$ on $C^\ast:=\bigcup_{n=1}^\infty C_n^\ast$ and with edge set $E^\ast:=\bigcup_{n=1}^\infty E_n^\ast$ where
\vspace*{-2.5mm}
\begin{align*}
C^\ast_n :={}&\big\{\,a,a+\tfrac{1}{3}\Delta,a+\tfrac{2}{3}\Delta,a+\Delta\;\big\vert\; [a,a+\Delta]\in\cC_{n-1}\,\big\}\\
={}&\{\,x,y\mid [x,y]\in\cC_n\,\}=V_n\cap C\text{ and}\\
E^\ast_n :={}&\big\{\,\{a,a+\tfrac{2}{3}\Delta\},\{a+\tfrac{1}{3}\Delta,a+\tfrac{2}{3}\Delta\},\{a+\tfrac{1}{3}\Delta,a+\Delta\}\;\big\vert\; [a,a+\Delta]\in \cC_{n-1}\,\big\}\subset E_n.
\end{align*}
We shall find the halved Farey graph (minus one edge) as a contraction minor of~$G^\ast$.
For this, we write $G^\ast_{\le n}:=(C^\ast_n,\bigcup_{k=1}^n E_k^\ast)$ and $M:=\bigcup_{n=1}^\infty M_n$ where
\begin{align*}
M_n &:=\big\{\,\{a+\tfrac{1}{3}\Delta,a+\tfrac{2}{3}\Delta\}\;\big\vert\; [a,a+\Delta]\in \cC_{n-1}\,\big\}\subset E_n^\ast;
\end{align*}
see Figure~\ref{fig:WhirlFarey} for an illustration.
We write $M_{\le n}:=\bigcup_{k=1}^n M_k$.
If $D$ is an independent set of edges and $H$ is any graph, then we denote by $H/D$ the contraction minor of $H$ obtained by contracting the edges in $D\cap E(H)$.


\noindent\begin{figure}[ht]
\centering
\begin{minipage}{.5\textwidth}
  \centering
  \includegraphics[width=.95\textwidth]{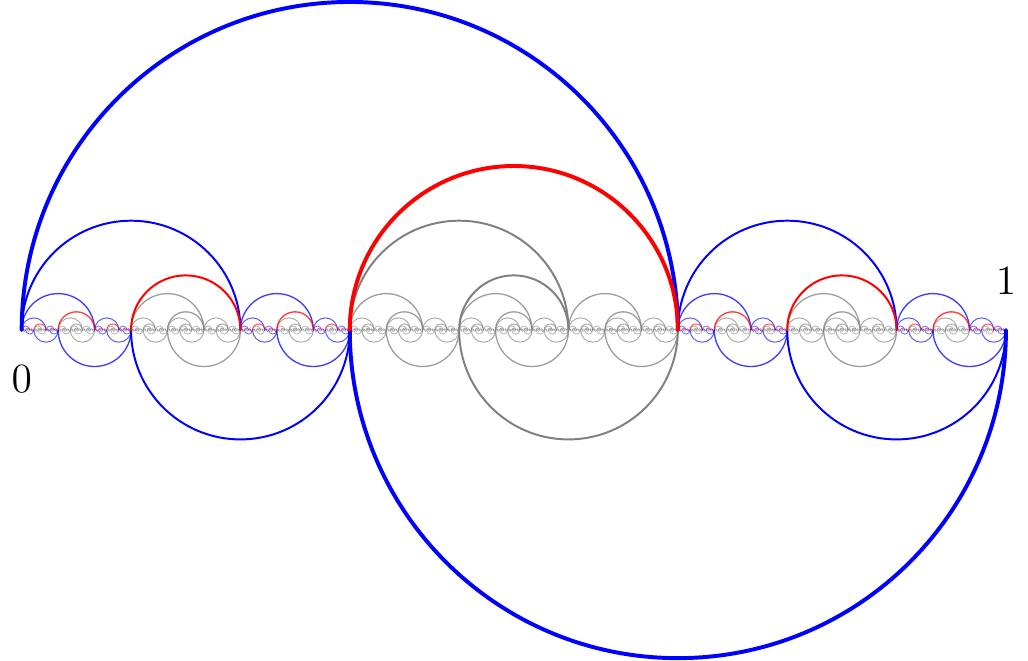}
\end{minipage}%
\begin{minipage}{.5\textwidth}
  \centering
  \includegraphics[width=.95\textwidth]{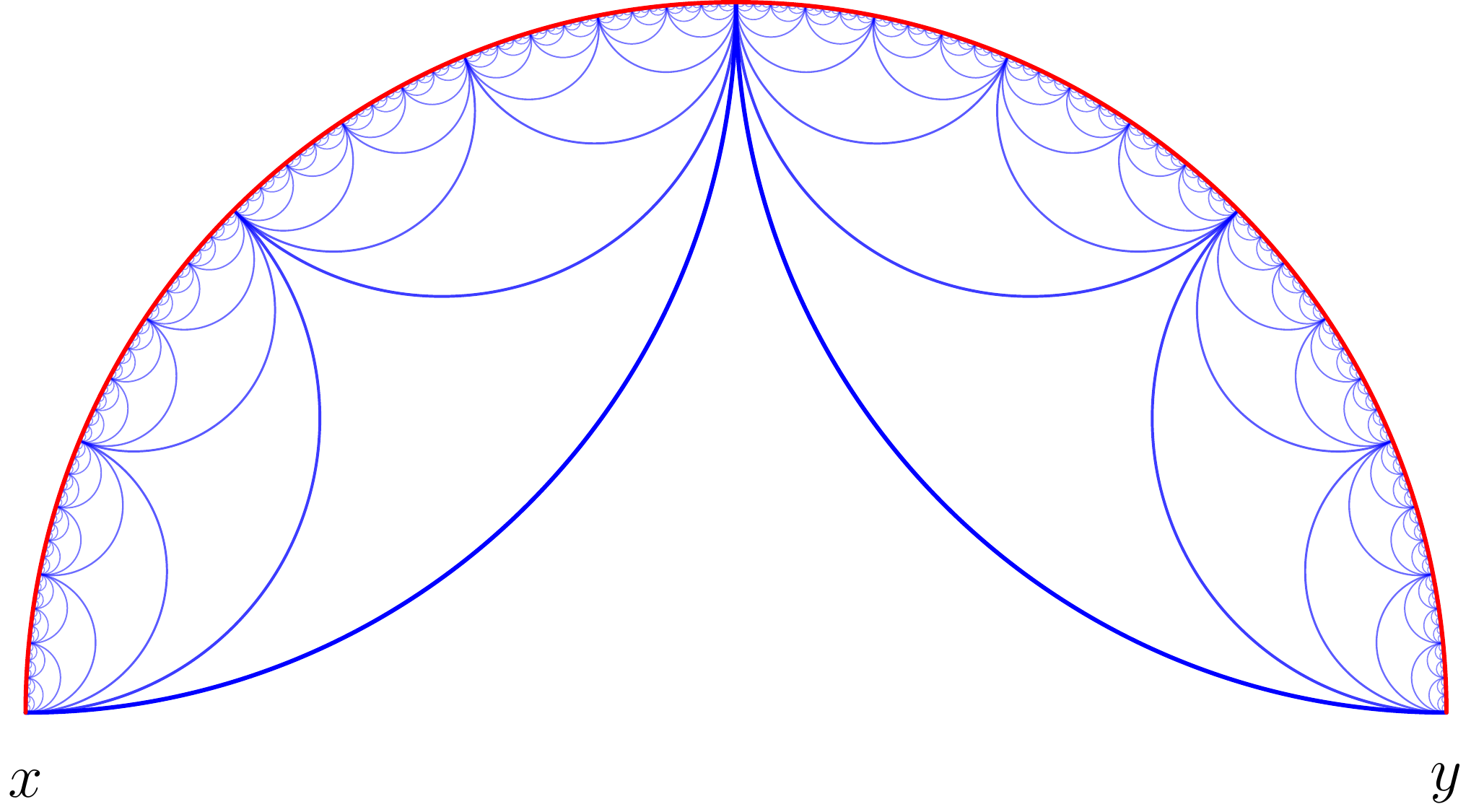}
\end{minipage}
\caption{On the left: The red edges form $M$, and together with the blue edges they form $G^\ast$.
On the right: $\hFG-E(\hFG_0)$ with blue edge set and red vertex set.}
\label{fig:WhirlFarey}
\end{figure}

\begin{lemma}\label{findinghFGinWhirl}
There exists an isomorphism $G^\ast\!/M\cong\hFG-E(\hFG_0)$ that associates $0$ and $1$ with the two vertices of $\hFG_0$.
\end{lemma}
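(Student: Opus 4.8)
The plan is to build the isomorphism level by level, matching the recursive construction of $\hFG$ with the recursive structure of $G^\ast$ after contracting the edges of $M$. First I would set up notation: write $H_n := G^\ast_{\le n}/M_{\le n}$ for the graph obtained from $G^\ast_{\le n}$ by contracting the (independent) matching $M_{\le n}$, and observe that $G^\ast/M = \bigcup_{n} H_n$, so it suffices to produce compatible isomorphisms $\varphi_n\colon H_n \to \hFG_n - E(\hFG_0)$ with $\varphi_n\rest H_{n-1} = \varphi_{n-1}$ and $\varphi_n(0)=$ one vertex of $\hFG_0$, $\varphi_n(1)=$ the other. The base case $n=1$ is a direct check: $G^\ast_{\le 1}$ is the path $0\,\tfrac23\,\tfrac13\,1$ with $M_{\le 1}=\{\{\tfrac13,\tfrac23\}\}$, so contracting that middle edge yields a triangle on $0$, $1$ and the contracted vertex $v_{\{0,1\}}$, which is exactly $\hFG_1 - E(\hFG_0)$ (the two blue edges of $\hFG_0$ subdivided—rather, the apex joined to both endpoints).

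The key step is the inductive step, and it is essentially bookkeeping about which edges of $G^\ast$ survive contraction and what they become. The natural bijection is: each interval $[a,a+\Delta]\in\cC_{n}$ corresponds, via its two endpoints $\{a,a+\Delta\}=\{x,y\}$, to an edge of $H_n$; I would show by induction that these are precisely the edges of $H_n$ playing the role of the \emph{blue} edges of $\hFG_n$, while the edges coming from earlier levels are the black ones. Then, passing from level $n$ to level $n+1$: within the interval $[a,a+\Delta]\in\cC_n$, the block $G_{n+1}$ contributes on the sub-block over $[a,a+\Delta]$ the three edges $\{a,a+\tfrac23\Delta\},\{a+\tfrac13\Delta,a+\tfrac23\Delta\},\{a+\tfrac13\Delta,a+\Delta\}$, and $M_{n+1}$ contracts the middle one $\{a+\tfrac13\Delta,a+\tfrac23\Delta\}$. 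After contraction this produces a single new vertex $v$ adjacent to both $a$ and $a+\Delta$ by blue edges, and recolours the old edge $\{a,a+\Delta\}$ black — which is exactly the operation $\hFG_n\to\hFG_{n+1}$ applied to the blue edge $\{x,y\}$. Doing this simultaneously over all intervals of $\cC_n$, and noting the new vertices and edges over distinct intervals are disjoint (they lie in disjoint blocks of $G_{n+1}$, which only share the endpoints $a,a+\Delta$ already accounted for), lets me extend $\varphi_n$ to $\varphi_{n+1}$ by sending the contracted vertex over $[a,a+\Delta]$ to the new vertex $v_e$ of $\hFG_{n+1}$ for the corresponding blue edge $e$. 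One should also check surjectivity/injectivity at each stage, but both follow from the bijection with $\cC_n$: $|C^\ast_n/M_n\text{-identifications}|$ matches $|V(\hFG_n)|$ and the edge correspondence is visibly a bijection.

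The main obstacle I expect is not conceptual but organisational: keeping the colour/level bookkeeping straight so that the "blue edges of $\hFG_n$" really do correspond exactly to "edges of $H_n$ indexed by $\cC_n$", and verifying that contraction does not accidentally identify vertices across different intervals or create multi-edges. The cleanest way to handle this is to phrase the inductive hypothesis as a statement about an isomorphism of \emph{edge-coloured} graphs between $H_n$ with a suitable colouring (blue on the $\cC_n$-edges, black otherwise) and $\hFG_n$ with its defining colouring, so that the recursive step becomes a single application of the defining operation of the halved Farey graph on each blue edge in parallel. Once that is in place, the limit isomorphism $G^\ast/M = \varinjlim H_n \cong \varinjlim(\hFG_n - E(\hFG_0)) = \hFG - E(\hFG_0)$ is immediate, and the fact that $0,1$ are sent to the two vertices of $\hFG_0$ is maintained throughout since they are never contracted (they lie in no edge of $M$).
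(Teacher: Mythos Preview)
Your proposal is correct and follows essentially the same inductive scheme as the paper: build compatible isomorphisms $\varphi_n\colon G^\ast_{\le n}/M_{\le n}\to \hFG_n-E(\hFG_0)$ and take their union. One small slip: in the base case, contracting the middle edge of the path $0\,\tfrac23\,\tfrac13\,1$ yields a path of length two, not a triangle --- but you immediately identify this correctly as $\hFG_1-E(\hFG_0)$, so no harm done.

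Where the paper differs is precisely in the bookkeeping you flag as the main obstacle. Instead of carrying an edge-colouring through the induction, the paper observes that both sides come with a canonical linear order on their vertex sets: on $\hFG_n$ the blue edges form a Hamilton path from $x$ to $y$, and on $\{0,1\}\cup M_{\le n}$ one inherits an order from~$\Q$ (ordering contracted edges by their smaller endpoint). The isomorphism $\varphi_n$ is then \emph{forced} to be the unique order-isomorphism between these two linearly ordered sets of the same finite size, and one only has to check that this order-isomorphism respects edges. This sidesteps the need to verify by hand that no unwanted identifications or multi-edges arise, since the map is pinned down before one looks at the edge structure. Your colouring approach achieves the same end with slightly more explicit tracking; the paper's order trick is a cleaner packaging of the same induction.
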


\begin{proof}
Let $x$ and $y$ be the two endvertices of $\hFG_0$.

On the one hand, for every $n\in\N$ the $x$--$y$ Hamilton path of $\hFG_n$ formed by the blue edges of $\hFG_n$ induces a linear ordering on the vertex set of $\hFG_n$ in which $x$ is the least element and $y$ is the greatest, and these orderings are compatible for distinct numbers $n$.

On the other hand, for every integer $n\ge 1$ the vertex set $\{0,1\}\cup M_{\le n}$ of $G^\ast_{\le n}/M$ inherits the linear ordering ${\le_n}$ from $\Q$ in which $0<_n e<_n 1$ for all $e\in M_{\le n}$ and $\{u,v\}<_n \{s,t\}$ if and only if $\min \{u,v\}<_{\Q} \min \{s,t\}$ for all $uv\neq st\in M_{\le n}$, and again these linear orderings are compatible for distinct numbers $n$.

An induction on $n\ge 1$ shows that the unique order-isomorphism  $\varphi_n$ between the linearly ordered finite vertex sets of $G^\ast_{\le n}/M$ and $\hFG_n-E(\hFG_0)$ is a graph-isomorphism such that $\varphi_1\subset\cdots\subset\varphi_n$.
Then the ascending union $\bigcup_{n=1}^\infty\varphi_n$ of these isormorphisms is the desired graph-isomorphism between $G^\ast\!/M$ and $\hFG-E(\hFG_0)$ that associates $0$ and $1$ with the two vertices of $\hFG_0$.
\end{proof}

In order to find a Farey graph minor in $G$, we must find two halved Farey graph minors in $G$.
For this, we consider copies of $G^\ast$ in $G$ that arise by linear transformation.
Every permutation $\pi$ of $\Q$ acts on both the set of graphs $H$ with $V(H)\subset\Q$ and the set of edge sets $D$ with $D\subset [\Q]^2$ by renaming every vertex $v$ to~$\pi(v)$.
Then we write $\pi H$ and $\pi D$ for the resulting graph and edge set.
Now let us consider the two permutations $\pi_1(x):=(1/9)x+3/9$ and $\pi_2(x):=(1/9)x+5/9$.
These send $G^\ast$ to copies $\pi_1 G^\ast$ and $\pi_2 G^\ast$ of $G^\ast$ that are subgraphs of~$G$.
By Lemma~\ref{findinghFGinWhirl} we have $\pi_1 G^\ast\!/\pi_1 M\cong\hFG-E(\hFG_0)$ and $\pi_2 G^\ast\!/\pi_2 M\cong\hFG-E(\hFG_0)$ by isomorphisms that associate the vertices $3/9$, $4/9$ and $5/9$, $6/9$ with the two vertices of $\hFG_0$.
Joining these two halved Farey graph minors appropriately yields the desired Farey graph minor, as shown in Figure~\ref{fig:WhirlFareyMinor}:


\begin{figure}[ht]
    \centering
    \includegraphics[width=.618\textwidth]{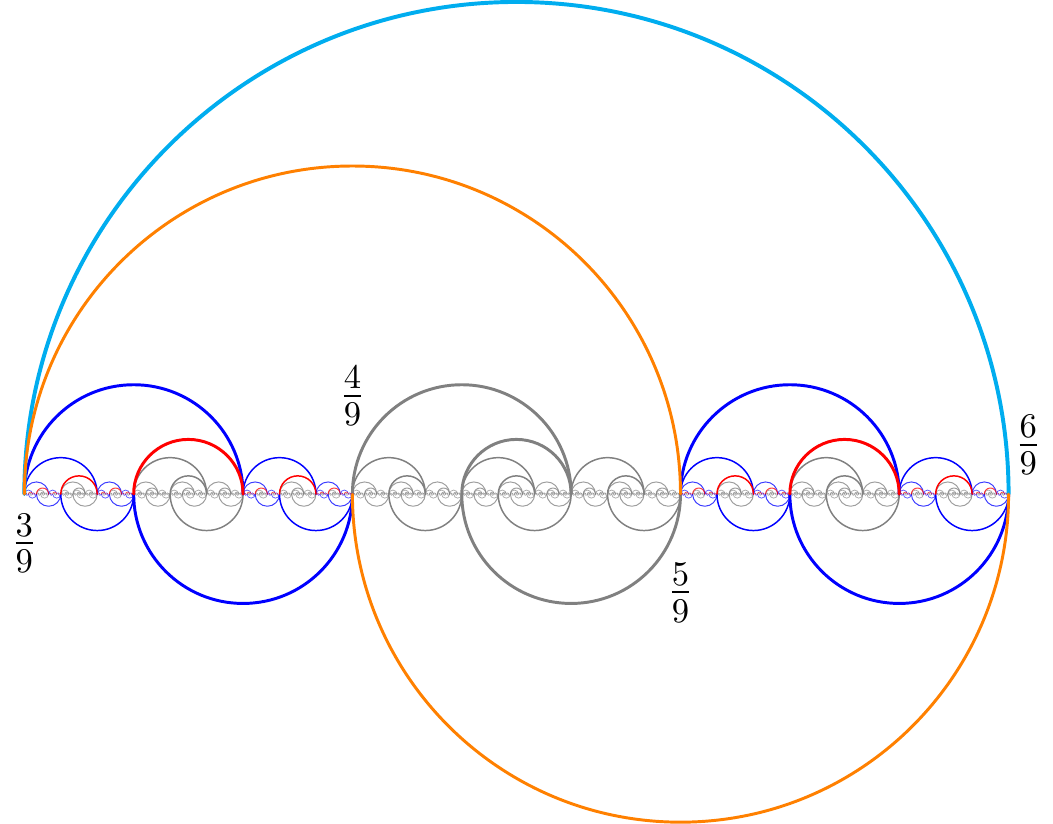}
    \caption{This is $G[\,V\cap[3/9,6/9]\,]$. The two subgraphs $\pi_1 G^\ast$ and $\pi_2 G^\ast$ are drawn using both red and blue, like in Figure~\ref{fig:WhirlFarey}. Theorem~\ref{FareyGraphCantor} states that the Farey graph arises from the subgraph consisting of the coloured edges by contracting red and orange while keeping blue and cyan.}
    \label{fig:WhirlFareyMinor}
\end{figure}

\begin{mainresult}\label{FareyGraphCantor}
The whirl graph contains the Farey graph as a minor with branch sets of size two:
\begin{alignat*}{3}
    \FG\cong \big(&\pi_1 G^\ast &&{}\cup \pi_2 G^\ast &&{}+\big\{\tfrac{3}{9},\tfrac{5}{9}\big\}+\big\{\tfrac{4}{9},\tfrac{6}{9}\big\}+\big\{\tfrac{3}{9},\tfrac{6}{9}\big\}\big)\,\big/\\
    \big(&\pi_1 M &&{}\cup \pi_2 M &&{}+\big\{\tfrac{3}{9},\tfrac{5}{9}\big\}+\big\{\tfrac{4}{9},\tfrac{6}{9}\big\}\big)
\end{alignat*}
where $\pi_1 G^\ast\cup \pi_2 G^\ast+\big\{\tfrac{3}{9},\tfrac{5}{9}\big\}+\big\{\tfrac{4}{9},\tfrac{6}{9}\big\}+\big\{\tfrac{3}{9},\tfrac{6}{9}\big\}\subset G$.
But the whirl graph does not contain the Farey graph as a topological minor.\qed
\end{mainresult}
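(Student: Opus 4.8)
The plan is to derive both assertions from material already established. For the first, I would apply Lemma~\ref{findinghFGinWhirl} twice. As $\pi_1$ and $\pi_2$ are injective affine maps of $\Q$ with $\pi_i G^\ast\subseteq G$, applying $\pi_i$ to the lemma yields isomorphisms $\pi_i G^\ast/\pi_i M\cong\hFG-E(\hFG_0)$ identifying $\pi_i(0)$ and $\pi_i(1)$ --- that is, $\tfrac{3}{9},\tfrac{4}{9}$ when $i=1$ and $\tfrac{5}{9},\tfrac{6}{9}$ when $i=2$ --- with the two vertices of $\hFG_0$. Since $\pi_1 G^\ast$ and $\pi_2 G^\ast$ lie on $[\tfrac{3}{9},\tfrac{4}{9}]$ and $[\tfrac{5}{9},\tfrac{6}{9}]$ they are vertex-disjoint. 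Set $H:=\pi_1 G^\ast\cup\pi_2 G^\ast+\{\tfrac{3}{9},\tfrac{5}{9}\}+\{\tfrac{4}{9},\tfrac{6}{9}\}+\{\tfrac{3}{9},\tfrac{6}{9}\}$, a subgraph of $G$, and $D:=\pi_1 M\cup\pi_2 M+\{\tfrac{3}{9},\tfrac{5}{9}\}+\{\tfrac{4}{9},\tfrac{6}{9}\}$. I would first check that $D$ is a matching contained in $E(H)$: the edge sets $\pi_1 M$ and $\pi_2 M$ are matchings on disjoint vertex sets, and none of their endvertices is one of $\tfrac{3}{9},\tfrac{4}{9},\tfrac{5}{9},\tfrac{6}{9}$ because $0$ and $1$ are not endvertices of $M$-edges. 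Thus $H/D$ is a minor of $G$ whose branch sets are $\{\tfrac{3}{9},\tfrac{5}{9}\}$, $\{\tfrac{4}{9},\tfrac{6}{9}\}$, the edges of $\pi_1 M$ and of $\pi_2 M$, and the singletons of the remaining vertices of $H$; a short induction on the Cantor levels, showing $C^\ast_n\setminus C^\ast_{n-1}$ consists exactly of the endvertices of the $M_n$-edges, gives that every vertex of $G^\ast$ is $0$, $1$, or an endvertex of an $M$-edge, so there are in fact no singleton branch sets and all branch sets have size two. Finally, contracting $\{\tfrac{3}{9},\tfrac{5}{9}\}$ and $\{\tfrac{4}{9},\tfrac{6}{9}\}$ glues the two copies $\pi_i G^\ast/\pi_i M$ of $\hFG-E(\hFG_0)$ along the pair of vertices playing the role of $\hFG_0$, with $\{\tfrac{3}{9},\tfrac{6}{9}\}$ becoming the edge of $\hFG_0$, so $H/D=G_1\cup G_2$ with $G_1\cap G_2=\hFG_0$, which is $F$; this is the displayed formula.

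For the second assertion I would argue by contradiction using Theorem~\ref{Mainresult}, the crux being that $F$ itself contains infinitely many pairwise edge-disjoint, pairwise order-compatible paths between two of its vertices. Fix a copy $G_1\cong\hFG$ of the halved Farey graph in $F$, let $x,y$ be the two vertices of $\hFG_0$, and for each $n\ge 0$ let $H_n$ be the $x$--$y$ Hamilton path of $\hFG_n$ formed by its blue edges. The sets $E(H_n)$ are pairwise disjoint, since $E(H_0)=\{xy\}$ is recoloured black once and never recreated, while for $n\ge 1$ the set $E(H_n)$ consists exactly of the edges created at level $n$ (turning black at level $n+1$), so no edge lies in two of them. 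The $H_n$ are pairwise order-compatible because, as recorded in the proof of Lemma~\ref{findinghFGinWhirl}, the blue Hamilton path of $\hFG_n$ induces a linear order on $V(\hFG_n)$ with $x$ least and $y$ greatest, and these orders are compatible for distinct $n$; hence any two of the $H_n$ traverse their common vertices in the same order. So $\{H_n:n\ge 0\}$ is the desired family.

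Now suppose for contradiction that $G$ contained a subdivision $S$ of $F$, so that $F$ were a topological minor of $G$, with branch vertices $x',y'$ for $x,y$ and a subdivision path $S_e$ for each $e\in E(F)$. An $x$--$y$ path $P$ in $F$ with edge set $\{e_1,\dots,e_k\}$ yields the $x'$--$y'$ path $S_P:=S_{e_1}\cup\cdots\cup S_{e_k}$ in $G$; edge-disjoint paths in $F$ give edge-disjoint paths in $G$, and I would verify that order-compatible $x$--$y$ paths $P,P'$ give order-compatible $x'$--$y'$ paths $S_P,S_{P'}$: a common vertex of $S_P$ and $S_{P'}$ is either a branch vertex, corresponding to a common vertex of $P$ and $P'$, or an inner vertex of some $S_e$ with $e$ a common edge of $P$ and $P'$; since $P$ and $P'$ are order-compatible they traverse their common vertices in the same order, hence in particular use each common edge in the same direction, which pins down the order in which $S_P$ and $S_{P'}$ meet the inner vertices of the corresponding $S_e$ as well as how these interleave with the branch vertices. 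Applying this to $\{H_n\}$ would produce infinitely many pairwise edge-disjoint, pairwise order-compatible $x'$--$y'$ paths in $G$, contradicting Theorem~\ref{Mainresult}; hence $G$ contains no subdivision of $F$, i.e.\ $G$ does not contain the Farey graph as a topological minor. The step I expect to be the main obstacle is exactly this last piece of bookkeeping: making precise and clean the claim that ``order-compatibility survives subdivision'', i.e.\ that $P\mapsto S_P$ carries pairwise order-compatible families to pairwise order-compatible families. Everything else reduces to Lemma~\ref{findinghFGinWhirl}, the level structure of $\hFG$, and Theorem~\ref{Mainresult}.
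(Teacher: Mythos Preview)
Your proposal is correct. For the first assertion you do exactly what the paper does: the paragraph preceding the theorem applies Lemma~\ref{findinghFGinWhirl} to $\pi_1 G^\ast$ and $\pi_2 G^\ast$, notes that the resulting copies of $\hFG-E(\hFG_0)$ attach at $\{3/9,4/9\}$ and $\{5/9,6/9\}$, and then glues them as in Figure~\ref{fig:WhirlFareyMinor}; the paper then simply writes \qed. Your additional bookkeeping (that $D$ is a matching, and that every vertex of $G^\ast$ is $0$, $1$, or an $M$-endvertex so that all branch sets genuinely have size two) is a welcome sanity check the paper leaves implicit.

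For the second assertion the paper gives no argument here at all: it is quoted in the introduction as a result of \cite{TypicalInfinitelyEdgeconnectedGraphs} and \cite{FareyGraphChar}, where Theorem~\ref{Mainresult} is the key input. Your direct argument via the blue Hamilton paths $H_n$ of $\hFG_n$ is precisely the intended mechanism and is sound; the verification you flag as the ``main obstacle''---that order-compatibility of $P,P'$ passes to $S_P,S_{P'}$---goes through cleanly once you observe (as you do) that any common edge of $P$ and $P'$ must join two \emph{consecutive} common vertices and is therefore traversed in the same direction by both paths, so that the internal vertices of the corresponding subdivision path slot in consistently between the corresponding branch vertices. No gap.
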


\begin{bibdiv}
\begin{biblist}

\bib{A77}{article}{
  title={{B}emerkung zu einem {P}roblem aus der {T}heorie der unendlichen {G}raphen},
  author={T.~Andreae},
  journal={Abh.\ Math.\ Sem.\ Univ.\ Hamburg},
  volume={46},
  number={1},
  pages={91},
  year={1977},
  doi={10.1007/BF02993015},
  review={\MR{MR0505832}}
}

\bib{A13}{article}{
  title={Classes of locally finite ubiquitous graphs},
  author={T.~Andreae},
  journal={J.~Combin.\ Theory (Series B)},
  volume={103},
  number={2},
  pages={274--290},
  year={2013},
  doi={10.1016/j.jctb.2012.11.003}
}

\bib{A02}{article}{
  title={On disjoint configurations in infinite graphs},
  author={T.~Andreae},
  journal={J.~Graph Theory},
  volume={39},
  number={4},
  pages={222--229},
  year={2002},
  doi={10.1002/jgt.10016}
}

\bib{A79}{article}{
  title={{\"U}ber eine {E}igenschaft lokalfiniter, unendlicher {B}\"{a}ume},
  author={T.~Andreae},
  journal={J.~Combin.\ Theory (Series B)},
  volume={27},
  number={2},
  pages={202--215},
  year={1979},
  review={\MR{MR0546863}},
  doi={10.1016/0095-8956(79)90082-0}
}

\bib{DoubleRayEUbiquitous}{article}{
    title = {{Edge-disjoint double rays in infinite graphs: A Halin type result}},
    journal = {J.~Combin.\ Theory (Series B)},
    volume = {111},
    pages = {1--16},
    year = {2015},
    doi = {10.1016/j.jctb.2014.08.005},
    author = {N.~Bowler and J.~Carmesin and J.~Pott},
    review={\MR{MR3315597}},
    eprint={1307.0992}
}

\bib{U1}{article}{
    title={{Ubiquity in graphs I: Topological ubiquity of trees}},
    author={N.~Bowler and C.~Elbracht and J.~Erde and P.~Gollin and K.~Heuer and P.~Pitz and M.~Teegen},
    eprint={1806.04008},
    year={2018},
    note={Submitted}
}

\bib{U2}{article}{
    title={{Ubiquity in graphs II: Ubiquity of graphs with non-linear end structure}},
    author={N.~Bowler and C.~Elbracht and J.~Erde and P.~Gollin and K.~Heuer and P.~Pitz and M.~Teegen},
    eprint={1809.00602},
    year={2018},
    note={Submitted}
}

\bib{U3}{article}{
    title={{Ubiquity in graphs III: Ubiquity of graphs with extensive tree decompositions}},
    author={N.~Bowler and C.~Elbracht and J.~Erde and P.~Gollin and K.~Heuer and P.~Pitz and M.~Teegen},
    year={In preparation}
}

\bib{OfficeHoursGroupTheory}{book}{
	author = {M.~Clay and D.~Margalit},
	title = {{Office Hours with a Geometric Group Theorist}},
	year = {2017},
	publisher = {Princeton University Press},
	review={\MR{MR3645425}},
	doi={10.23943/princeton/9780691158662.001.0001}
}

\bib{DiestelBook5}{book}{
	author = {R.~Diestel},
	edition = {5th},
	publisher = {Springer},
	title = {{Graph Theory}},
	year = {2016},
	doi = {10.1007/978-3-662-53622-3}
}

\bib{GenGridTheorem}{article}{
	author = {J.~Geelen and B.~Joeris},
    title = {{A generalization of the Grid Theorem}},
    eprint = {1609.09098},
    year = {2016},
    note={Submitted}
}

\bib{GollinHeuerKcon}{article}{
    author = {J.P.~Gollin and K.~Heuer},
    title = {Characterising $k$-connected sets in infinite graphs},
    eprint = {1811.06411},
    year = {2018},
    note={Submitted}
}

\bib{H70}{article}{
  title={{D}ie {M}aximalzahl fremder zweiseitig unendlicher {W}ege in {G}raphen},
  author={R.~Halin},
  journal={Math.\ Nachr.},
  volume={44},
  number={1--6},
  pages={119--127},
  year={1970},
  review={\MR{MR0270953}},
  doi={10.1002/mana.19700440109}
}

\bib{halin78}{incollection}{
	Author = {R.~Halin},
	Booktitle = {Advances in Graph Theory, Annals of Discrete Mathematics},
	Editor = {B.Bollob\'as},
	Publisher = {North-Holland},
	Title = {Simplicial decompositions of infinite graphs},
	Volume = {3},
	Year = {1978},
	review={\MR{MR0499113}},
	doi={10.1016/S0167-5060(08)70500-4}
}

\bib{H65}{article}{
  title={{\"U}ber die {M}aximalzahl fremder unendlicher {W}ege in {G}raphen},
  author={R.~Halin},
  journal={Math.\ Nachr.},
  volume={30},
  number={1--2},
  pages={63--85},
  year={1965},
  review={\MR{MR0190031}},
  doi={10.1002/mana.19650300106}
}

\bib{hatcher2017topology}{article}{
  title={Topology of numbers},
  author={A.~Hatcher},
  journal={Book in preparation},
  year={2017},
  note={Available \href{https://pi.math.cornell.edu/~hatcher/TN/TNbook.pdf}{online}}
}

\bib{JoerisPhD}{thesis}{
	author={B.~Joeris},
	title={Connectivity, tree-decompositions and unavoidable-minors},
	school={University of Waterloo},
	year={\href{http://hdl.handle.net/10012/9315}{2015}}
}

\bib{TypicalInfinitelyEdgeconnectedGraphs}{article}{
    title={Every infinitely edge-connected graph contains the Farey graph or $T_{\aleph_0}\!\ast t$ as a minor},
    author={J.~Kurkofka},
    year={2020},
    note={Submitted},
    eprint={2004.06710}
}

\bib{FareyGraphChar}{article}{
    title={The Farey graph is uniquely determined by its connectivity},
    author={J.~Kurkofka},
    year={2020},
    note={Submitted},
    eprint={2006.12472}
}

\bib{L76}{article}{
  title={A problem concerning infinite graphs},
  author={J.~Lake},
  journal={Disc.\ Math.},
  volume={14},
  number={4},
  pages={343--345},
  year={1976},
  review={\MR{MR0419297}},
  doi={10.1016/0012-365X(76)90066-2}
}

\bib{OporowskiOxleyThomas}{article}{
    author = {B. Oporowski and J. Oxley and R. Thomas},
    title = {Typical Subgraphs of 3- and 4-connected Graphs},
    journal = {J.~Combin.\ Theory (Series B)},
    volume = {57},
    number = {2},
    year = {1993},
    pages = {239--257},
    doi = {10.1006/jctb.1993.1019},
    publisher = {Academic Press, Inc.},
    review={\MR{MR1207490}}
} 






\bib{W76}{article}{
  title={A note on a problem of {H}alin's},
  author={D.R.~Woodall},
  journal={J.~Combin.\ Theory (Series B)},
  volume={21},
  number={2},
  pages={132--134},
  year={1976},
  doi={10.1016/0012-365X(78)90181-4},
  review={\MR{MR0427154}}
}

\end{biblist}
\end{bibdiv}
\end{document}